\newtheorem{theorem}{Theorem}[section]
\theoremstyle{plain}
\newtheorem{definition}[theorem]{Definition}
\newtheorem{question}[theorem]{Question}
\theoremstyle{definition}
\theoremstyle{remark}
\numberwithin{equation}{section}
\begin{document}

\title[On Kernel of the regulator map]{On Kernel of the regulator map}

\author{Sen Yang}
\address{Yau Mathematical Sciences Center, Tsinghua University
\\
Beijing, China}
\email{syang@math.tsinghua.edu.cn; senyangmath@gmail.com}

\subjclass[2010]{14C25}
\date{}

\begin{abstract}
By using the infinitesimal methods due to Bloch, Green and Griffiths in \cite{Bloch1, GGtangentspace}, we construct an infinitesimal form of the regulator map and verify that its kernel 
is $\Omega_{\mathbb{C}/ \mathbb{Q}}^{1}$, which suggests that Question ~\ref{question: main} seems reasonable at the infinitesimal level.
\end{abstract}

\maketitle

\tableofcontents

\section{Background and question}
\label{Background and question}
Let $X$ be a smooth projective curve over the complex number field $\mathbb{C}$. In 1970s, Bloch constructed the regulator map R: $ K_{2}(X) \to H^{1}(X, \mathbb{C}^{\ast})$ in several ways. Later, Deligne found a different construction by considering $H^{1}(X, \mathbb{C}^{\ast})$ as the group of line bundles with connections. We recall his construction very briefly as follows.

For $x$ a point on $X$, we use $i_{x}$ to denote the inclusion $ x \to X$.
The flasque BGQ resolution of $K_{2}(O_{X})$
{\footnotesize
\begin{align*}
 0   \rightarrow K_{2}(O_{X})   \to K_{2}(\mathbb{C}(X)) \to \bigoplus_{x \in X^{(1)}}i_{x, \ast}K_{1}(\mathbb{C}(x)) \to 0,
\end{align*}
} 
shows that $H^{0}(K_{2}(O_{X}))$ can be computed as $\mathrm{Ker}\{ K_{2}(\mathbb{C}(X))  \to \bigoplus\limits_{x \in X^{(1)}} K_{1}(\mathbb{C}(x)) \}$. So we have the exact sequence of groups
{\footnotesize
\begin{align*}
 0   \to H^{0}(K_{2}(O_{X})) \to K_{2}(\mathbb{C}(X)) \to \bigoplus_{x \in X^{(1)}}K_{1}(\mathbb{C}(x)).
\end{align*}
} 

It's known that there exists the following Gysin exact sequence in topology,
{\footnotesize
\begin{align*}
 0 \to  H^{1}(X, \mathbb{C}^{\ast}) \to H^{1}(\mathbb{C}(X), \mathbb{C}^{\ast}) \to \bigoplus\limits_{x \in X^{(1)}} \mathbb{C}^{\ast}, 
\end{align*}
} 
where $ H^{1}(\mathbb{C}(X), \mathbb{C}^{\ast}) = \underrightarrow{\mathrm{lim}}H^{1}(X-S, \mathbb{C}^{\ast})$ and $S$ is finite points on $X$.

The main ingredient to construct the regulator map R: $H^{0}(K_{2}(O_{X})) \to H^{1}(X, \mathbb{C}^{\ast})$  is the following commutative diagram
{\footnotesize
 \begin{equation}
  \begin{CD}
       0 @>>> H^{0}(K_{2}(O_{X}))  @>>> K_{2}(\mathbb{C}(X)) @>>> \bigoplus\limits_{x \in X^{(1)}} K_{1}(\mathbb{C}(x)) \\
     @VVV @V \mathrm{R} VV @V \mathrm{R} VV  @V \cong VV\\
     0 @>>>  H^{1}(X, \mathbb{C}^{\ast}) @>>> H^{1}(\mathbb{C}(X), \mathbb{C}^{\ast}) @>>> \bigoplus\limits_{x \in X^{(1)}} \mathbb{C}^{\ast}.
  \end{CD}
\end{equation}
}
That is, one constructs a map R: $ K_{2}(\mathbb{C}(X)) \to H^{1}(\mathbb{C}(X), \mathbb{C}^{\ast})$ and use it to deduce the regulator map R: $ H^{0}(K_{2}(O_{X})) \to H^{1}(X, \mathbb{C}^{\ast})$. We refer the readers to \cite{Bloch1} and Section 6 in \cite{Hain} for more details.

This regulator map has nice motivic features and is related with a general program of Bloch-Beilinson conjecture. In this short note, we focus on the following question, see Section 2 in \cite{GGregulator} for related discussion. To fix notations, for any abelian group $M$, $M_{\mathbb{Q}}$ denotes the image of $M$ in $M \otimes_{\mathbb{Z}} \mathbb{Q}$ in the following.

\begin{question} [Conjecture 2.4 in \cite{GGregulator}]  \label{question: main}
Let $\mathrm{R}:$ $ H^{0}(K_{2}(O_{X})) \to H^{1}(X, \mathbb{C}^{\ast})$ be the regulator map, 
then $\mathrm{Ker(R)}_{\mathbb{Q}} = K_{2}(\mathbb{C})_{\mathbb{Q}}$.
\end{question}
This question is very difficult to approach, though it has very simple form. For $X=\mathrm{P}^{1}$, this conjecture has been verified by Kerr \cite{Kerr}. 

\textbf{Acknowledgements}
The author is very grateful to Phillip Griffiths, James Lewis and Kefeng Liu for discussions, and to Spencer Bloch and Matt Kerr for comments on previous version. He also thanks his colleagues Eduard Looijenga and Thomas Farrell for explaining questions in \cite{Bloch1}.

Many thanks to anonymous referee(s) for careful reading and professional suggestions, which improves this note a lot. 

%% Last but not least, the author thanks his colleagues Bangming Deng, Eduard Looijenga, Shenghao Sun, Yao Yuan and Chao Zhang for stimulating conversations and also thanks Ben Dribus and Jerome Hoffman for collaboration \cite{DHY}.

\section{Main results}
\label{Main results}
In this section, we shall define an infinitesimal form of the regulator map R: $ H^{0}(K_{2}(O_{X})) \to H^{1}(X, \mathbb{C}^{\ast})$ and verify its kernel is $\Omega_{\mathbb{C} / \mathbb{Q}}^{1}$. Our approach is inspired by the following result due to Green and Griffiths:
 \begin{theorem} [Page 74 and page 125 in \cite{GGtangentspace}]  \label{theorem:GGtangentBGQ}
Let $X$ be a smooth projective curve over $\mathbb{C}$, the Cousin flasque resolution of $\Omega_{X/ \mathbb{Q}}^{1}$
  \[
  0 \to \Omega_{X/ \mathbb{Q}}^{1} \to \Omega_{\mathbb{C}(X)/ \mathbb{Q}}^{1} \xrightarrow{\rho} \bigoplus\limits_{x \in X^{(1)}} i_{x,\ast}H_{x}^{1}(\Omega_{X/\mathbb{Q}}^{1}) \to 0,
  \]
   is the tangent sequence to BGQ flasque resolution of the sheaf $K_{2}(O_{X})$
   {\footnotesize
\begin{align*}
 0   \rightarrow K_{2}(O_{X})   \to K_{2}(\mathbb{C}(X)) \rightarrow \bigoplus_{x \in X^{(1)}} i_{x,\ast} K_{1}(\mathbb{C}(x)) \to 0, 
\end{align*}
} 
where the map $\rho$  is known to take principal parts.   
\end{theorem}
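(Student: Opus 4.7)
The plan is to prove the two sequences are identified term-by-term via the ``tangent functor'' construction in the sense of Bloch--Green--Griffiths, and then to check that the maps match. For any presheaf $F$ of abelian groups on schemes over $\mathbb{C}$, one defines the tangent $TF$ as the kernel of the evaluation $F(-\otimes \mathbb{C}[\epsilon]/\epsilon^{2}) \to F(-)$, with the natural scalar action of $\mathbb{C}$. The sheaf-theoretic BGQ sequence is term-wise surjective onto its right-most term, so provided each $TF$ behaves well with respect to the relevant short exact sequences (which one sees by noting $\mathbb{C}[\epsilon]/\epsilon^{2}$ is flat over $\mathbb{C}$), applying $T$ produces an exact sequence whose terms we must match with the Cousin resolution.

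For the middle term, the identification $T K_{2}(\mathbb{C}(X)) \cong \Omega^{1}_{\mathbb{C}(X)/\mathbb{Q}}$ is a consequence of the Van der Kallen computation of $K_{2}$ of the dual numbers, realized by the symbolic formula $\{1+a\epsilon, b\} \mapsto a\,\tfrac{db}{b}$. This is the cleanest step. For the skyscraper $K_{1}$-term, the key subtlety is that the tangent of the sheaf $\bigoplus_{x} i_{x,\ast}K_{1}(\mathbb{C}(x))$ involves \emph{both} the standard tangent $T K_{1}(\mathbb{C}) = \mathbb{C}$ at each closed point \emph{and} infinitesimal motions of the point $x$ inside $X$; the latter contributions stack up to give the principal-part data at $x$. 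I would argue (following the heuristic of Green--Griffiths) that this combined tangent is naturally $H^{1}_{x}(\Omega^{1}_{X/\mathbb{Q}})$, because principal parts of a rational $1$-form modulo regular $1$-forms encode precisely these first-order deformations of a divisor together with multiplicative tangent data at its support.

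Once the three identifications are in hand, what remains is to verify that the tame symbol $\partial\colon K_{2}(\mathbb{C}(X)) \to \bigoplus_{x}K_{1}(\mathbb{C}(x))$ differentiates to the Cousin map $\rho$ taking principal parts. This is a local calculation at each $x$: pick a uniformizer $t$ at $x$, write a symbol $\{f,g\}$ and the corresponding differential $f\,\tfrac{dg}{g}$, and compare the tame-symbol output with the principal part of the differential. This reduces to a residue calculation that one can carry out in power series.

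The main obstacle, as I see it, is the middle paragraph above: rigorously identifying $T\!\left(\bigoplus_{x} i_{x,\ast}K_{1}(\mathbb{C}(x))\right)$ with $\bigoplus_{x} i_{x,\ast}H^{1}_{x}(\Omega^{1}_{X/\mathbb{Q}})$. Naively, $TK_{1}(\mathbb{C})=\mathbb{C}$, which is far smaller than $H^{1}_{x}(\Omega^{1}_{X/\mathbb{Q}})$; the enlargement comes from allowing the underlying point $x$ to deform along $X$, and one must be careful about how much of this deformation is already ``seen'' by the sheaf structure versus what genuinely contributes to the infinitesimal quotient. Once this step is accepted, the remaining identification $T K_{2}(\mathcal{O}_{X}) \cong \Omega^{1}_{X/\mathbb{Q}}$ follows from left-exactness of $T$ applied to the BGQ sequence.
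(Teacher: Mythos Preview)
The paper does not prove this theorem; it is quoted as a result of Green and Griffiths (pages 74 and 125 of \cite{GGtangentspace}) and used as input for the rest of the note. So there is no ``paper's own proof'' to compare against --- the author treats the statement as a black box.

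That said, your sketch is a faithful outline of how the argument actually goes in \cite{GGtangentspace}. The identification $TK_{2}(\mathbb{C}(X))\cong\Omega^{1}_{\mathbb{C}(X)/\mathbb{Q}}$ via Van der Kallen and the symbol formula $\{1+a\varepsilon,b\}\mapsto a\,db/b$ is exactly right, and you have correctly isolated the genuinely delicate point: the tangent to the skyscraper term $\bigoplus_{x}i_{x,\ast}K_{1}(\mathbb{C}(x))$ is not the naive $\bigoplus_{x}\mathbb{C}$ but must incorporate first-order motions of the support, and Green--Griffiths argue precisely that this packaging yields $H^{1}_{x}(\Omega^{1}_{X/\mathbb{Q}})$. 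Your final step --- differentiating the tame symbol and matching it with the principal-part map $\rho$ via a local residue computation in a uniformizer --- is also the right verification. The only thing I would flag is that your appeal to ``left-exactness of $T$'' and flatness of $\mathbb{C}[\varepsilon]/\varepsilon^{2}$ is not quite enough to propagate exactness on the right; in practice one checks the identifications term-by-term and then verifies the Cousin sequence is exact independently (it is, being a Cousin resolution), rather than deducing exactness from the $K$-theory side.
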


It follows that $H^{0}(\Omega_{X/ \mathbb{Q}}^{1})$ can be computed as $\mathrm{Ker}\{\Omega_{\mathbb{C}(X)/ \mathbb{Q}}^{1}   \xrightarrow{\rho} \bigoplus\limits_{x \in X^{(1)}} H_{x}^{1}(\Omega_{X/\mathbb{Q}}^{1}) \}$. So we have the exact sequence of groups
{\footnotesize
\begin{align*}
 0   \to H^{0}(\Omega_{X/ \mathbb{Q}}^{1}) \to  \Omega_{\mathbb{C}(X)/ \mathbb{Q}}^{1}   \xrightarrow{\rho} \bigoplus\limits_{x \in X^{(1)}} H_{x}^{1}(\Omega_{X/\mathbb{Q}}^{1}).
\end{align*}
}

\begin{definition} [ page 71 and page 125 in \cite{GGtangentspace} ]  \label{definition: Residue}
 For $X$ a smooth projective curve over $\mathbb{C}$ and $x$ a point on $X$, there exists a residue map
 \[
  \mathrm{Res}: H_{x}^{1}(\Omega_{X/\mathbb{Q}}^{1}) \to \mathbb{C},
 \]
 which is defined as follows:
 
Using $\Omega_{O_{X,x} /\mathbb{Q}}^{1}(nx)$ to denote the absolute 1-forms with poles of order at most $n$ at $x$,  we define $\mathrm{Res}_{x}$ as the following composition:
\[
\Omega_{O_{X,x}/\mathbb{Q}}^{1}(nx) \longrightarrow \Omega_{O_{X,x}/\mathbb{C}}^{1}(nx) \xrightarrow{\mathrm{Res}} \mathbb{C}.
\]
If $\xi$ is the local uniformizer centered at $x$, an element of $H_{x}^{1}(\Omega_{X/\mathbb{Q}}^{1})$  is represented by the following diagram
\begin{equation}
\begin{cases}
 \begin{CD}
   O_{X,x} @>\xi^{k}>> O_{X,x} @>>>  O_{X,x}/(\xi^{k})@>>> 0  \\
   O_{X,x} @>\psi>> \Omega_{O_{X,x}/ \mathbb{Q}}^{1}.
 \end{CD}
\end{cases}
\end{equation}
For such an element, we define $\mathrm{Res}_{x}(\dfrac{ \psi}{\xi^{k}}) \in \mathbb{C}$.
 
\end{definition}

It is known that the tangent space to $\mathbb{C}^{\ast}$, which  is defined to be the kernel of the natural projection:
\[
  \mathbb{C}[\varepsilon]^{\ast}  \xrightarrow{\varepsilon =0}  \mathbb{C}^{\ast},
\]     
can be identified with $\mathbb{C}$ and the tangent map tan: $\mathbb{C}[\varepsilon]^{\ast} \to \mathbb{C}$
is given by $z_{0} + z_{1}\varepsilon  \to \dfrac{z_{1}}{z_{0}}$. 
This tangent map further induces a map between cohomology groups tan: $ H^{1}(X,  \mathbb{C}[\varepsilon]^{\ast})  \to H^{1}(X, \mathbb{C})$. With this interpretation, one can consider $H^{1}(X, \mathbb{C})$ as the tangent space to $H^{1}(X, \mathbb{C}^{\ast})$(this is used in \cite{Bloch1}).

There exists  the following Gysin exact sequence in topology:
\[
0 \to H^{1}(X, \mathbb{C}) \to H^{1}(\mathbb{C}(X), \mathbb{C}) \to \bigoplus\limits_{x \in X^{(1)}} \mathbb{C},
\]
e.g., see page 54-55 in \cite{Green}. The boundary map $H^{1}(\mathbb{C}(X), \mathbb{C}) \to \bigoplus\limits_{x \in X^{(1)}} \mathbb{C}$ can be described via Hodge theory as follows. Let $D=\{p_{1}, \cdots, p_{n}\}$ be finite points on $X$ and let $U$ be the open complement, $U = X- D$. Let $i_{D}: D \to X$ denote the inclusion, the residue map Res: $ \Omega^{\bullet}_{X}(\mathrm{log}D) \to i_{D,\ast}\Omega^{\bullet -1}_{D} $ induces 
      Res: $ \mathbb{H}^{1}(\Omega^{\bullet}_{X}(\mathrm{log}D)) \to \mathbb{H}^{0}(\Omega^{\bullet}_{D}) $. This gives the map Res: $ H^{1}(U, \mathbb{C}) \to \bigoplus\limits_{i=1, \cdots, n}  \mathbb{C}$, by using the identifications  
     $\mathbb{H}^{1}(\Omega^{\bullet}_{X}(\mathrm{log}D)) \cong H^{1}(U, \mathbb{C})$ and $\mathbb{H}^{0}(\Omega^{\bullet}_{D}) = H^{0}(D, \mathbb{C}) \cong \bigoplus\limits_{i=1, \cdots, n}  \mathbb{C}$.

The following theorem is an infinitesimal form of diagram (1.1):
\begin{theorem} \label{theorem: tangReg}
There exists the following commutative diagram
{\footnotesize
\begin{equation}
  \begin{CD}
    0 @>>> H^{0}(\Omega_{X/ \mathbb{Q}}^{1}) @>>>  \Omega_{\mathbb{C}(X)/ \mathbb{Q}}^{1} @>\rho>> \bigoplus\limits_{x \in X^{(1)}} H_{x}^{1}(\Omega_{X/\mathbb{Q}}^{1})  \\
   @VVV  @V \mathrm{R'} VV @V \mathrm{R'} VV  @V \mathrm{Res} VV\\
     0 @>>>  H^{1}(X, \mathbb{C}) @>>> H^{1}(\mathbb{C}(X), \mathbb{C}) @>\mathrm{Res}>> \bigoplus\limits_{x \in X^{(1)}} \mathbb{C},
  \end{CD}
\end{equation}
}
where the map $\mathrm{R'}$'s are the natural maps sending $d_{/ \mathbb{Q}}f$ to $d_{/ \mathbb{C}}f$.
\end{theorem}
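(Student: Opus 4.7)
The diagram assembles two exact sequences — the tangent BGQ sequence of Theorem~\ref{theorem:GGtangentBGQ} on top and the Gysin sequence on the bottom — connected by vertical ``tangent regulator'' maps. My plan is to proceed in three tasks: first, to define and justify the middle vertical $\mathrm{R'}$; second, to verify commutativity of the right-hand square; and third, to deduce the left vertical as the map induced on kernels.

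For the middle column, I would factor $\mathrm{R'}$ as
$$\Omega_{\mathbb{C}(X)/\mathbb{Q}}^{1} \;\twoheadrightarrow\; \Omega_{\mathbb{C}(X)/\mathbb{C}}^{1} \;\longrightarrow\; H^{1}(\mathbb{C}(X), \mathbb{C}),$$
where the first arrow is the canonical surjection induced by $\mathbb{Q} \hookrightarrow \mathbb{C}$ (killing $d_{/\mathbb{Q}} z$ for $z \in \mathbb{C}$ and sending $d_{/\mathbb{Q}} f$ to $d_{/\mathbb{C}} f$ for $f \in \mathbb{C}(X)$), and the second arrow uses Grothendieck's algebraic de Rham comparison: a meromorphic $1$-form with poles in a finite set $S$ gives a class in $H^{1}(X \setminus S, \mathbb{C}) = \mathbb{H}^{1}(X, \Omega^{\bullet}_{X}(\log S))$, so the direct limit over $S$ lands in $H^{1}(\mathbb{C}(X), \mathbb{C})$. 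Additivity and $\mathbb{Q}$-linearity of the resulting map are automatic.

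The crux is commutativity of the right-hand square. By $\mathbb{C}(X)$-linearity it suffices to check, for $\omega = g\, d_{/\mathbb{Q}} f$ and each $x \in X^{(1)}$, the identity
$$\mathrm{Res}_{x}\bigl(\rho(\omega)\bigr) \;=\; \mathrm{Res}_{x}\bigl(\mathrm{R'}(\omega)\bigr).$$
The left-hand side is, by Definition~\ref{definition: Residue}, the classical residue of the principal part of $g\, df$ at $x$, obtained after the projection $\Omega^{1}_{O_{X,x}/\mathbb{Q}}(nx) \to \Omega^{1}_{O_{X,x}/\mathbb{C}}(nx)$. The right-hand side is the residue of the meromorphic $1$-form $g\, d_{/\mathbb{C}} f$ read from its logarithmic-de-Rham representative, i.e.\ the coefficient of $d\xi/\xi$ in the log-pole expansion at $x$. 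Both reduce to the usual Laurent-coefficient $\mathrm{res}_{x}(g\, df)$, so the square commutes.

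Once the right square commutes and the two horizontal rows are known to be exact, $\mathrm{R'}$ automatically carries $\ker \rho = H^{0}(\Omega_{X/\mathbb{Q}}^{1})$ into $\ker \mathrm{Res} = H^{1}(X, \mathbb{C})$, supplying the left vertical arrow and making the left square commute by construction. I expect the main obstacle to be the residue identity in the middle step: one must verify that the Green--Griffiths absolute residue, defined via projection onto $\Omega^{1}_{/\mathbb{C}}$, matches the Hodge-theoretic residue arising from the log-pole filtration on $\Omega^{\bullet}_{X}(\log D)$. This compatibility is essentially built into Definition~\ref{definition: Residue}, but explicit verification amounts to unraveling both definitions on the local generator $\psi/\xi^{k}$ with $\psi = g\, d_{/\mathbb{Q}} f$.
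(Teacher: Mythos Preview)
Your proposal is correct and follows essentially the same three-step outline as the paper: define the middle $\mathrm{R'}$ via the projection $\Omega^{1}_{/\mathbb{Q}} \to \Omega^{1}_{/\mathbb{C}}$ followed by passage to de Rham cohomology, verify the right square by reducing both vertical residues to the classical Laurent residue of the relative form, and then obtain the left vertical as the induced map on kernels. The paper's only cosmetic difference is that it writes a generic element of $\Omega^{1}_{\mathbb{C}(X)/\mathbb{Q}}$ explicitly as $\dfrac{h\,d_{/\mathbb{Q}}g}{f_{1}^{l_{1}}\cdots f_{k}^{l_{k}}}$ with the $f_{i}$ irreducible and checks the square on that expression, whereas you argue on generators $g\,d_{/\mathbb{Q}}f$ and invoke the log-de Rham description; the content is the same.
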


\begin{proof}

The map $\mathrm{R'}$: $\Omega_{\mathbb{C}(X)/ \mathbb{Q}}^{1} \to H^{1}(\mathbb{C}(X), \mathbb{C})$ can be described as follows. Let $U$ be open affine in $X$, $H^{1}(U, \mathbb{C})$ can be computed as $\Gamma(U,\Omega_{U/\mathbb{C}})/ d_{/ \mathbb{C}} \Gamma(U, O_{U})$. Given any element $\alpha \in \Omega_{U/ \mathbb{Q}}^{1}$, its image $[\alpha]$ in $\Omega_{U/ \mathbb{C}}^{1}$ defines an element in $H^{1}(U, \mathbb{C})$.

To check the commutativity of the right square,  
working locally in a Zariski open affine neighborhood U, we can write an element $ \beta \in \Omega_{\mathbb{C}(X) / \mathbb{Q}}^{1}$ as 
\[
 \beta = \dfrac{h \ d_{/ \mathbb{Q}}g}{f^{l_{1}}_{1}\dots f^{l_{k}}_{k}},
\]
where $f_{1}, \dots, f_{k}, h \in \Gamma(U,O_{U})$ are relatively prime and $f_{i}'s$ are irreducible.

The following diagram is commutative:
\[
  \begin{CD}
     \dfrac{h \ d_{/ \mathbb{Q}}g}{f^{l_{1}}_{1}\dots f^{l_{k}}_{k}}  @>\rho>> \sum_{i} \dfrac{h \ d_{/ \mathbb{Q}}g}{f^{l_{1}}_{1}\dots \hat{f}^{l_{i}}_{i} \dots f^{l_{k}}_{k}} \\
  @V \mathrm{R'}VV   @V \mathrm{Res} VV\\
     \dfrac{h \ d_{/ \mathbb{C}}g}{f^{l_{1}}_{1}\dots f^{l_{k}}_{k}}   @>\mathrm{Res}>> \sum_{i} \mathrm{Res}_{x_{i}}(\dfrac{h \ d_{/ \mathbb{C}}g}{f^{l_{1}}_{1}\dots f^{l_{k}}_{k}} ),
  \end{CD}
\]
where $x_{i}=\{f_{i} = 0 \}$ and $\hat{f}^{l_{i}}_{i}$ means to omit the $i^{th}$ term. 

The map $\mathrm{R'}$: $\Omega_{\mathbb{C}(X)/ \mathbb{Q}}^{1} \to H^{1}(\mathbb{C}(X), \mathbb{C})$ induces $\mathrm{R'}$: $ H^{0}(\Omega_{X/ \mathbb{Q}}^{1})  \to H^{1}(X, \mathbb{C})$.
\end{proof}

Let $\{f_{0},g_{0} \} \in H^{0}(K_{2}(O_{X})) $  and
let $(N,\bigtriangledown)$ denote the bundle with connection $\bigtriangledown$, as recalled on page 4 in \cite{Bloch1}. There exists the following commutative diagram:
\[
 \begin{CD}
    \{f_{0},g_{0} \}   @<\varepsilon=0<<  \{f_{0}+\varepsilon f_{1},g_{0}+\varepsilon g_{1} \}  @>\mathrm{tan}>>  \dfrac{f_{1}}{f_{0}}\dfrac{d_{/ \mathbb{Q}}g_{0}}{g_{0}} -  \dfrac{g_{1}}{g_{0}}\dfrac{d_{/ \mathbb{Q}}f_{0}}{f_{0}}\\
   @V \mathrm{R} VV    @VVV  @V \mathrm{R'} VV \\
   \{f_{0},g_{0} \}^{\ast} (N,\bigtriangledown) @<<\varepsilon=0<   \{f_{0}+\varepsilon f_{1},g_{0}+\varepsilon g_{1} \}^{\ast} (N,\bigtriangledown)   @>\mathrm{tan}>> \dfrac{f_{1}}{f_{0}}\dfrac{d_{/ \mathbb{C}}g_{0}}{g_{0}} -  \dfrac{g_{1}}{g_{0}}\dfrac{d_{/ \mathbb{C}}f_{0}}{f_{0}}.
 \end{CD}
\]

The commutativity of left square is trivial. To check the right one, since $\{f_{0}+\varepsilon f_{1},g_{0}+\varepsilon g_{1} \} = \{f_{0},g_{0} \} \{f_{0}, 1+\varepsilon \dfrac{g_{1}}{g_{0}} \} \{1+ \varepsilon\dfrac{f_{1}}{f_{0}}, g_{0} \} \{1+ \varepsilon\dfrac{f_{1}}{f_{0}},  1+\varepsilon \dfrac{g_{1}}{g_{0}} \}$, we reduce to considering  $\{1+\varepsilon f_{1}, g_{0} \}$ which is obvious:
\[
 \begin{CD}
   \{1+\varepsilon f_{1},g_{0} \}  @>\mathrm{tan}>>  f_{1}\dfrac{d_{/ \mathbb{Q}}g_{0}}{g_{0}} \\
     @VVV  @V \mathrm{R'} VV \\
    \{1+\varepsilon f_{1},g_{0} \}^{\ast} (N,\bigtriangledown)   @>\mathrm{tan}>> f_{1}\dfrac{d_{/ \mathbb{C}}g_{0}}{g_{0}},
 \end{CD}
\]
where the up tan map is well-known and the down tan map is the formula (2.12) on page 14 in [1].

In this sense, we consider the map $\mathrm{R'}:$ $ H^{0}(\Omega_{X/ \mathbb{Q}}^{1})  \to H^{1}(X, \mathbb{C})$ as the infinitesimal form of the regulator map R: $H^{0}(K_{2}(O_{X})) \to H^{1}(X, \mathbb{C}^{\ast})$  and computer the kernel of  $\mathrm{R'}$.

Since $H^{1}(X, \mathbb{C})$ has Hodge decomposition $H^{1}(X, \mathbb{C}) \cong H^{0}(\Omega_{X/\mathbb{C}}^{1}) \oplus H^{1}(O_{X})$ and the map $\mathrm{R'}$: $H^{0}(\Omega_{X/ \mathbb{Q}}^{1}) \to H^{1}(X, \mathbb{C})$ naturally maps $d_{/ \mathbb{Q}}f$ to $d_{/ \mathbb{C}}f$, so $\mathrm{R'}$
is the composition $H^{0}(\Omega_{X/ \mathbb{Q}}^{1}) \to  H^{0}(\Omega_{X/\mathbb{C}}^{1}) \hookrightarrow H^{1}(X, \mathbb{C})$. Hence $\mathrm{Ker(R')} = \mathrm{Ker}\{ H^{0}(\Omega_{X/ \mathbb{Q}}^{1}) \to  H^{0}(\Omega_{X/\mathbb{C}}^{1}) \}$.

\begin{theorem} \label{theorem: main}
$\mathrm{Ker(R')} = \Omega_{\mathbb{C}/ \mathbb{Q}}^{1}$.

\end{theorem}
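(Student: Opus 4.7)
The plan is to apply the relative cotangent sequence for the tower $\mathbb{Q} \subset \mathbb{C} \subset O_X$ and then take global sections, thereby reducing the theorem to the observation (already made in the paragraph preceding its statement) that $\mathrm{Ker}(\mathrm{R'})$ coincides with the kernel of the natural surjection $H^{0}(X, \Omega_{X/\mathbb{Q}}^{1}) \to H^{0}(X, \Omega_{X/\mathbb{C}}^{1})$.

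First I would write down the standard short exact sequence of $O_X$-modules
\[
0 \to \Omega_{\mathbb{C}/\mathbb{Q}}^{1} \otimes_{\mathbb{C}} O_X \to \Omega_{X/\mathbb{Q}}^{1} \to \Omega_{X/\mathbb{C}}^{1} \to 0,
\]
where the surjection is $d_{/\mathbb{Q}}f \mapsto d_{/\mathbb{C}}f$ and the injection extends $\omega \mapsto \omega \otimes 1$. Right-exactness is formal for K\"ahler differentials; the injectivity on the left uses that $X$ is smooth over $\mathbb{C}$, which makes $\Omega_{X/\mathbb{C}}^{1}$ locally free and causes the sequence to split Zariski-locally.

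Next I would apply the left-exact functor $H^{0}(X,-)$. Since $X$ is a smooth projective (in particular connected) curve over $\mathbb{C}$, we have $H^{0}(X, O_X) = \mathbb{C}$, so
\[
H^{0}(X,\, \Omega_{\mathbb{C}/\mathbb{Q}}^{1} \otimes_{\mathbb{C}} O_X) \;=\; \Omega_{\mathbb{C}/\mathbb{Q}}^{1} \otimes_{\mathbb{C}} H^{0}(X, O_X) \;=\; \Omega_{\mathbb{C}/\mathbb{Q}}^{1}.
\]
Left-exactness of $H^{0}$ then produces
\[
0 \to \Omega_{\mathbb{C}/\mathbb{Q}}^{1} \to H^{0}(X, \Omega_{X/\mathbb{Q}}^{1}) \to H^{0}(X, \Omega_{X/\mathbb{C}}^{1}),
\]
identifying the kernel of the right-hand map as $\Omega_{\mathbb{C}/\mathbb{Q}}^{1}$. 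Combined with the factorization $\mathrm{R'}\colon H^{0}(\Omega_{X/\mathbb{Q}}^{1}) \to H^{0}(\Omega_{X/\mathbb{C}}^{1}) \hookrightarrow H^{1}(X,\mathbb{C})$ recorded just above the theorem via the Hodge decomposition, this yields $\mathrm{Ker}(\mathrm{R'}) = \Omega_{\mathbb{C}/\mathbb{Q}}^{1}$.

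The only nontrivial step is the injectivity of the first arrow in the short exact sequence above, and that is where I would expect a careful reader to want a sentence of justification: for $X/\mathbb{C}$ smooth, local coordinates on a Zariski open provide a direct-sum decomposition of $\Omega_{X/\mathbb{Q}}^{1}$ as $\Omega_{\mathbb{C}/\mathbb{Q}}^{1} \otimes_{\mathbb{C}} O_X$ plus $\Omega_{X/\mathbb{C}}^{1}$, from which injectivity follows globally by sheafification. All other ingredients---$H^{0}(X,O_X) = \mathbb{C}$, the Hodge decomposition of $H^{1}(X,\mathbb{C})$, and the identification of $\mathrm{R'}$ with $d_{/\mathbb{Q}}f \mapsto d_{/\mathbb{C}}f$---have already been provided in the excerpt, so once the cotangent sequence is in hand the proof collapses into a single diagram chase.
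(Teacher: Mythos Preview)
Your proposal is correct and follows essentially the same route as the paper: write down the relative cotangent short exact sequence for $\mathbb{Q}\subset\mathbb{C}\subset O_X$, take global sections, and use $H^{0}(X,O_X)=\mathbb{C}$ to identify the kernel with $\Omega_{\mathbb{C}/\mathbb{Q}}^{1}$. The only difference is that you spell out why the left map of the sequence is injective (via smoothness of $X/\mathbb{C}$), a point the paper simply asserts.
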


\begin{proof}
There is a natural short exact sequence of sheaves
\[
0 \to \Omega_{\mathbb{C}/\mathbb{Q}}^{1}\otimes_{\mathbb{C}} O_{X} \to \Omega_{X/\mathbb{Q}}^{1} \to \Omega_{X/\mathbb{C}}^{1} \to 0.
\]

The associated long exact sequence is of the form
\[
0 \to H^{0}(\Omega_{\mathbb{C}/\mathbb{Q}}^{1}\otimes_{\mathbb{C}} O_{X}) \to H^{0}(\Omega_{X/\mathbb{Q}}^{1}) \to H^{0}(\Omega_{X/\mathbb{C}}^{1}) \to H^{1}(\Omega_{\mathbb{C}/\mathbb{Q}}^{1}\otimes_{\mathbb{C}} O_{X}) \to \cdots.
\]
So the kernel of $H^{0}(\Omega_{X/\mathbb{Q}}^{1}) \to H^{0}(\Omega_{X/\mathbb{C}}^{1})$
can be identified with $H^{0}(\Omega_{\mathbb{C}/\mathbb{Q}}^{1} \otimes_{\mathbb{C}} O_{X}) \cong H^{0}(O_{X}) \otimes\Omega_{\mathbb{C}/ \mathbb{Q}}^{1} \cong \mathbb{C} \otimes \Omega_{\mathbb{C}/ \mathbb{Q}}^{1} \cong \Omega_{\mathbb{C}/ \mathbb{Q}}^{1}$.

\end{proof}

Since the tangent space to $K_{2}(\mathbb{C})$ is $ \Omega_{\mathbb{C}/ \mathbb{Q}}^{1}$, this theorem suggests hat Question ~\ref{question: main} seems reasonable at the infinitesimal level.


\begin{thebibliography}{99}

 

 \bibitem{Bloch1}
  S.~Bloch, \emph{The dilogarithm and extensions of Lie algebras},  Algebraic K-theory, Evanston 1980 (Proc. Conf., Northwestern Univ., Evanston, Ill., 1980), pp. 1-23, Lecture Notes in Math., 854, Springer, Berlin-New York, 1981.
 
 

%%  \bibitem{DHY}
 %% B.~Dribus, J.W.~Hoffman and S.~Yang, \emph{Infinitesimal Theory of Chow Groups via K-theory, Cyclic Homology and the Relative Chern Character}, available at arXiv: 1501.07525.



\bibitem{Green}
M.~Green, \emph{Infinitesimal methods in Hodge theory}, Algebraic cycles and Hodge theory (Torino, 1993), 1-92, Lecture Notes in Math., 1594, Springer, Berlin, 1994.

\bibitem{GGregulator}
M.~Green and P.~Griffiths, \emph{The regulator map for a general curve}, Symposium in Honor of C. H. Clemens (Salt Lake City, UT, 2000), 117-127, Contemp. Math., 312, Amer. Math. Soc., Providence, RI, 2002. 


\bibitem{GGtangentspace}
M.~Green and P.~Griffiths, \emph{On the Tangent space to the space of algebraic cycles on a smooth algebraic variety}, Annals of Math Studies, 157. Princeton University Press, Princeton, NJ, 2005, vi+200 pp. ISBN: 0-681-12044-7.




\bibitem{Hain}
R.~Hain, \emph{Classical polylogarithms}, Motives (Seattle, WA, 1991), 3-42, Proc. Sympos. Pure Math., 55, Part 2, Amer. Math. Soc., Providence, RI, 1994. 
 
\bibitem{Kerr}
M.~Kerr, \emph{An elementary proof of Suslin reciprocity}. Canad. Math. Bull. 48 (2005), no. 2, 221-236.



\end{thebibliography}
\end{document}